\newtheorem{thm}{Theorem}[section]
\newtheorem{cor}[thm]{Corollary}
\newtheorem{lm}[thm]{Lemma}
\newtheorem*{clm*}{Claim}
\theoremstyle{definition}
\newtheorem{df}[thm]{Definition}
\numberwithin{equation}{section}
\newcommand{\sprf}{\noindent{\it Proof.}} %used in spf environment
\newcommand{\sqed}{\hfill\rule{1.3mm}{3mm}\medskip}
\newcommand{\cproof}{\noindent{\it Proof of claim.}\ } %used in spf environment
\newcommand{\cqed}{\hfill\rule{1.3mm}{3mm}}
\newcommand{\m}[1]{{\mathbf{\uppercase{#1}}}}
\DeclareMathOperator{\Con}{Con}
\newcommand{\bd}{\begin{description}}
\newcommand{\ed}{\end{description}}
\begin{document}

\title{Relatively congruence modular quasivarieties of modules}

\author{Keith A. Kearnes}
\address[Keith Kearnes]{Department of Mathematics\\
University of Colorado\\
Boulder, CO 80309-0395\\
USA}
\email{Keith.Kearnes@Colorado.EDU}

\subjclass{}
\keywords{Quasivariety, relatively congruence modular,
  relatively congruence distributive, finitely axiomatizable quasivariety,
quasivarieties of modules}
\date{May 3, 2015}

\begin{abstract}
  We show that the quasiequational theory of a relatively
  congruence modular quasivariety of left $R$-modules is determined
  by a two-sided ideal in $R$ together with a filter of left ideals.
  The two-sided ideal encodes the identities that hold in the quasivariety,
  while the filter of left ideals encodes the quasiidentities.
  The filter of left ideals defines a generalized notion of torsion.
  
  It follows from our result that if $R$ is left Artinian,
  then any relatively
  congruence modular quasivariety of left $R$-modules is axiomatizable
  by a set of identities together with at most one proper
  quasiidentity, and if $R$ is a commutative Artinian ring then
  any relatively
  congruence modular quasivariety of left $R$-modules is a variety.
    %In particular, this implies that a relatively congruence
  %modular quasivariety of modules over a finite ring is finitely axiomatizable.
\end{abstract}

\maketitle

\begin{center}
{\it Dedicated to Don Pigozzi}
\end{center}

\section{Introduction}\label{intro}
This paper is inspired by
Problem 9.13 of Don Pigozzi's paper,
{\it Finite basis theorems for relatively congruence-distributive
  quasivarieties},
\cite{pigozzi88}.
The problem asks:
\begin{quote}
  {\it
    Is it true that every
    finitely generated and
    relatively congruence modular quasivariety
    is finitely based?}
\end{quote}
This question is still open.
Pigozzi's paper shows the answer to be 
affirmative if ``modular'' is strengthened to ``distributive''.
Analogous problems for varieties were shown
to have positive solutions in
\cite{baker77,mckenzie87,willard00,kearnes15},
and a related problem for quasivarieties
was shown to have an affirmative solution in
\cite{maroti04}.
The best partial answer to Problem~9.13 that now exists
is Theorem 8 of \cite{dziobiak09}, which says  that
if a quasivariety
$\mathcal K$ {\it and the variety it generates} are 
    finitely generated and
    relatively congruence modular,
    then $\mathcal K$
    is finitely based.

A relatively congruence distributive quasivariety
is nothing other than a relatively congruence modular
quasivariety in which no member has a nontrivial
abelian congruence, so Pigozzi's paper solves the
part of Problem 9.13 that does not involve abelian
congruences. Tools for dealing with abelian
congruences in relatively congruence modular quasivarieties
were developed in \cite{kearnes92,kearnes98},
but they have not yet yielded a full solution to the problem.
What these tools show is that abelian congruences in
such quasivarieties are quasiaffine,
which means the blocks of an abelian congruence support
a module-like structure. In this paper we study
the purest relatively congruence modular
quasivarieties which are not distributive,
namely quasivarieties of modules.
Our main result is a description of all relatively
congruence modular quasivarieties of modules.

\section{the classification theorem}

For a unital ring $R$ let $R$-Mod be the variety
of left $R$-modules.
If $\mathcal K$ is a subquasivariety of $R$-Mod
and $M\in \mathcal K$, then a {\it $\mathcal K$-submodule}
(or {\it relative submodule}) of $M$ is an $R$-submodule $S\leq M$
such that $M/S\in \mathcal K$. $\mathcal K$ is
{\it relatively congruence
  modular (RCM)} if every $M\in \mathcal K$
has a modular lattice of $\mathcal K$-submodules.

For the simplest example of these definitions
take $R=\mathbb Z$, so that $R$-Mod is the variety of abelian
groups. Let $\mathcal K$ be the subquasivariety
of $\mathbb Z$-Mod consisting of torsion-free
abelian groups. The only relative submodules on, say,
$\mathbb Z\in \mathcal K$ are $(0)$ and $\mathbb Z$.
That is, $\mathbb Z$ is {\it relatively simple}
(although $\mathbb Z$ is far from being a simple module).
It can be shown that this quasivariety,
the quasivariety of torsion-free
abelian groups, is a minimal quasivariety which happens to be RCM.

Let's generalize the example above in an artificial way.
Let $R = \mathbb Z[t]$ and let $\mathcal K$
be the subquasivariety of $R$-Mod consisting
of all torsion-free abelian groups considered
as $R$-modules by defining $t$ to act as zero
on any module in $\mathcal K$
Then $\mathcal K$ is axiomatized by the identity
$tx=0$ together with a family of quasiidentities
of the form
\[nx=0\to x=0.\]
This is essentially the same as
the preceding example, so in particular it is RCM.

The point of this paper is to show
that every RCM quasivariety of modules looks like the one
from the previous paragraph.
For any RCM quasivariety 
$\mathcal K$ of $R$-modules
there is a set $\Sigma$ of 1-variable identities
along with a specific torsion notion which realizes
$\mathcal K$ as the subquasivariety of $R$-Mod
consisting of the torsion-free
$R$-modules that satisfy $\Sigma$.
$\Sigma$ corresponds to a two-sided ideal in $R$
while the torsion notion corresponds to a filter in the lattice
of left ideals of $R$.

Let's begin by identifying the role played by $\Sigma$.

  \begin{lm}\label{lm1}
    Let $\mathcal V$ be a subvariety of $R$-Mod and let
    $I=\{r\in R\;|\;\mathcal V\models rx=0\}$ be its annihilator.
    Then
    \begin{enumerate}
    \item[(1)] $I$ is a two-sided ideal in $R$,
      \item[(2)] $\Sigma:=\{rx=0\;|\;r\in I\}$
        axiomatizes $\mathcal V$ relative to $R$-Mod, and
      \item[(3)] $\mathcal V$ is definitionally equivalent to $R/I$-Mod.
        \end{enumerate}
  \end{lm}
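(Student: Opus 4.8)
The plan is to treat the three items in order, with the substance concentrated in~(2).

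For~(1) I would verify the closure conditions directly. The set $I$ is an additive subgroup of $R$: if $\mathcal V\models rx=0$ and $\mathcal V\models sx=0$, then $\mathcal V\models(r-s)x=0$, and $0\in I$. For the two ideal conditions, fix $r\in I$, $a\in R$, $M\in\mathcal V$ and $m\in M$; then $(ar)m=a(rm)=a\cdot 0=0$ gives $ar\in I$, while $(ra)m=r(am)=0$ (since $am\in M$ and $r$ annihilates every element of every member of $\mathcal V$) gives $ra\in I$. Hence $I$ is two-sided.

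For~(2) the key observation is that, modulo the defining identities of $R$-Mod, every identity true in a subvariety of $R$-Mod is equivalent to a finite conjunction of one-variable identities of the form $rx=0$. Indeed, using the $R$-module axioms any term in variables $x_1,\dots,x_n$ rewrites as a linear combination $r_1x_1+\cdots+r_nx_n$, so an arbitrary identity takes the form $\sum_i r_ix_i=\sum_i s_ix_i$, i.e.\ $\sum_i(r_i-s_i)x_i=0$. Substituting $0$ for all variables except $x_j$ shows this identity entails $(r_j-s_j)x=0$ for each $j$, and conversely the conjunction of these one-variable identities entails $\sum_i(r_i-s_i)x_i=0$. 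Since a variety is axiomatized by the identities that hold in it, $\mathcal V$ is the class of members of $R$-Mod satisfying every identity true in $\mathcal V$; by the previous sentence this axiom set is equivalent, over $R$-Mod, to the set of one-variable identities $rx=0$ holding in $\mathcal V$, and $\mathcal V\models rx=0$ precisely when $r\in I$. Thus $\Sigma=\{rx=0\mid r\in I\}$ axiomatizes $\mathcal V$ relative to $R$-Mod.

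For~(3), a member of $R$-Mod satisfies $\Sigma$ if and only if its $R$-action kills $I$, if and only if that action factors uniquely through the projection $\pi\colon R\to R/I$. This sets up mutually inverse passages between members of $\mathcal V$ and $R/I$-modules fixing the underlying abelian group: given $M\in\mathcal V$ one puts $(r+I)\cdot m:=rm$, and given an $R/I$-module $N$ one puts $r\cdot n:=(r+I)\cdot n$. Under these passages the basic operation $m\mapsto(r+I)m$ of the $R/I$-module coincides with the $R$-term operation $m\mapsto rm$, and the basic operation $n\mapsto rn$ of the $R$-module coincides with the $R/I$-term operation $n\mapsto(r+I)n$; hence the two term clones agree and $\mathcal V$ is definitionally equivalent to $R/I$-Mod. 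The only step calling for care is the reduction in~(2) of arbitrary identities to one-variable ones — including the routine but necessary normal form for module terms — and once that is in hand, (1) and (3) are bookkeeping.
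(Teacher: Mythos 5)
Your proof is correct, and it is built around exactly the reduction the paper singles out as the key idea (the paper itself omits the proof, noting only that a module identity $r_1x_1+\cdots+r_kx_k=0$ has the same strength as the set of one-variable identities $r_ix_i=0$). Your handling of the term normal form, the substitution argument, and the definitional equivalence in (3) are all sound.
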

  We imagine applying this in the situation where
  $\mathcal K$ is a subquasivariety of $R$-Mod
  and $\mathcal V$ is the variety generated by $\mathcal K$.

  We do not prove Lemma~\ref{lm1}, but do point out that a key idea
  in the proof is that a single module identity
  $r_1x_1+\cdots + r_kx_k=0$ has the same strength as
  the set $\{r_1x_1=0, \ldots, r_kx_k=0\}$
  of 1-variable module identities.

Lemma~\ref{lm1} allows us to pass from $R$ to $R/I$
and henceforth consider only the situation where
$\mathcal K$ generates $R$-Mod. We shall make this assumption
as we work out the main result of the paper.

Next we describe the torsion concept that plays a role
in this paper.
\begin{df}\label{torsion}
  Let $\mathcal L$ be the poset of finitely generated
left ideals of $R$ ordered by inclusion.
A {\it torsion notion} for $R$
is a subset $\mathcal F\subseteq \mathcal L$ satisfying the
following conditions:
\begin{enumerate}
\item[(1)] $\mathcal F$ is a nonempty order filter in $\mathcal L$.
  ($A\in \mathcal F$, $B\in \mathcal L$, and $A\subseteq B$
  implies $B\in \mathcal F$.)
  \item[(2)] $\mathcal F$ is downward directed.
    ($A, B\in \mathcal F$ implies there is a $C\in \mathcal F$
    such that $C\subseteq A$ and $C\subseteq B$.)
  \item[(3)] If $X, Y\subseteq R$
    are finite subsets such that the left ideals
    $(X)$ and $(Y)$ belong to $\mathcal F$, 
    then the left ideal $(XY)$ belongs to $\mathcal F$.
    %$\mathcal F$ is closed under multiplication.
    %($A, B\in\mathcal F$ implies $AB\in\mathcal F$.)
  \item[(4)] For all $A\in \mathcal F$ and $r\in R$
    there is $B\in\mathcal F$ such that $Br\subseteq A$.
  \item[(5)] (Regularity of elements of $\mathcal F$)
    If $A\in\mathcal F$, $r\in R$, and $Ar=0$,
    then $r=0$.
\end{enumerate}
\end{df}
Given a torsion notion $\mathcal F$ we say that
an element $m$ of an $R$-module $M$ is an
{\it $\mathcal F$-torsion element}
if $Am=0$ for some $A\in\mathcal F$. If $M$ has no nonzero
$\mathcal F$-torsion elements, then it is {\it $\mathcal F$-torsion-free}.
This may also be expressed by saying that $M\models Ax=0\to x=0$
for each $A\in\mathcal F$. It is easy to see that a
statement of the form $Ax=0\to x=0$
for a finitely generated left ideal $A$ is equivalent
to a quasiidentity. Namely
if $A=(a_1,\ldots,a_m)$,
then $Ax=0\to x=0$ is satisfied if and only if 
\[
(a_1x=0)\wedge \cdots \wedge (a_mx=0) \to (x=0) \tag{$q_A$}
\]
is satisfied, 
so the class of $\mathcal F$-torsion-free $R$-modules is
a quasivariety. 

Any torsion notion $\mathcal F$ contains $R$, and the set
$\{R\}$ is always a torsion notion.
Every $R$-module is torsion-free with respect to this
trivial torsion notion.

Items (1)--(4) of Definition~\ref{torsion}
simplify quite a bit when $R$ is commutative.
Namely, (4) automatically holds when $R$ is commutative, since
we can choose $B=A$. Item (3) now asserts that $\mathcal F$
is closed under multiplication. When this holds, (2) will also
hold, since for commutative rings the product of two ideals
is contained in each of them. Thus (1)--(4) merely say that
$\mathcal F$ is a multiplicatively closed order filter in the poset of
finitely generated ideals. (For any ring $R$,
item (5) of the definition asserts
that the free $R$-module, $R$, is $\mathcal F$-torsion-free.)
%If we restrict further to commutative
%principal ideal rings, and identify a principal ideal with a generator, items
%(1)--(5) merely say that $\mathcal F$ is the family of ideals
%generated by the elements of a (saturated) multiplicatively closed
%set of regular elements of $R$. 

Here is the statement of the main theorem of the paper.

\begin{thm}\label{main}
  Let $\mathcal K$ be a quasivariety
  of $R$-modules such that the variety generated by $\mathcal K$
  is all of $R$-Mod. Then $\mathcal K$ is RCM iff
  there is a torsion notion $\mathcal F$ such that
  $\mathcal K$ is the quasivariety of $\mathcal F$-torsion-free
  $R$-modules.
  \end{thm}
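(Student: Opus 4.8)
The plan is to prove the two implications separately. The easier direction is to show that if $\mathcal F$ is a torsion notion, then the quasivariety $\mathcal K$ of $\mathcal F$-torsion-free $R$-modules is RCM. Here I would use conditions (2)--(5) of Definition~\ref{torsion} to understand the lattice of $\mathcal K$-submodules of a fixed $M\in\mathcal K$. The key observation is that a submodule $S\leq M$ is a $\mathcal K$-submodule exactly when $M/S$ is $\mathcal F$-torsion-free, i.e., when $S$ is ``$\mathcal F$-closed'' in the sense that $Am\subseteq S$ for some $A\in\mathcal F$ forces $m\in S$. Using downward directedness (2) and property (4) one checks that the set of $\mathcal F$-closed submodules is closed under arbitrary intersection and that the $\mathcal F$-closure operator $S\mapsto \overline S$ sending $S$ to $\{m\in M : Am\subseteq S\text{ for some }A\in\mathcal F\}$ is a well-defined closure operator; property (3) is what makes $\overline{S}$ a submodule rather than merely a subset. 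I would then show this closure operator is \emph{algebraic} and, more importantly, that it is ``linear'' in the sense that $\overline{S+T}=\overline{\overline S+\overline T}$ and that intersection with an $\mathcal F$-closed submodule commutes appropriately with closure. Since the lattice of all submodules of $M$ is modular, and the $\mathcal K$-submodules form a lattice with meet = intersection and join = $\overline{S+T}$, a standard argument shows this sublattice-with-closure inherits modularity provided the closure operator interacts well enough with the modular structure; the regularity condition (5) ensures $M$ itself embeds enough ``free-like'' behavior to run the computation. This direction is essentially bookkeeping once the closure operator is in hand.

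The hard direction is the converse: given that $\mathcal K\subseteq R\text{-Mod}$ is RCM and generates $R$-Mod, produce the torsion notion $\mathcal F$. The natural candidate is
\[
\mathcal F = \{\, A\in\mathcal L \;:\; \mathcal K\models q_A \,\} = \{\, (a_1,\ldots,a_m)\in\mathcal L \;:\; \mathcal K\models (a_1x=0)\wedge\cdots\wedge(a_mx=0)\to x=0 \,\},
\]
the set of finitely generated left ideals $A$ such that every member of $\mathcal K$ is $A$-torsion-free. One must then verify that this $\mathcal F$ satisfies (1)--(5) and that $\mathcal K$ is \emph{exactly} the quasivariety of $\mathcal F$-torsion-free modules (not just contained in it). Several parts are immediate: (1) because a larger left ideal gives a logically weaker hypothesis in $q_A$; (5) because $\mathcal K$ generates $R$-Mod, so the regular module $R$ lies in the generated variety and hence $Ar=0$ in $R$ forces $r=0$ whenever $A\in\mathcal F$ --- wait, more carefully, one uses that $R$-Mod $=\Var(\mathcal K)$ together with the fact that identities of $\mathcal K$ are identities of $R$-Mod to see $R$ has no nontrivial annihilation; and (1) handles the filter condition. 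The substantive content is (2), (3), (4) and the equality $\mathcal K = \{\mathcal F\text{-torsion-free modules}\}$.

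The crux is where RCM must be invoked, and I expect this to be the main obstacle. The strategy is to take a quasiidentity $q$ valid in $\mathcal K$ and show it is implied by $\{q_A : A\in\mathcal F\}$ together with the identities of $\mathcal K$; this is what gives $\mathcal K\supseteq\{\mathcal F\text{-torsion-free modules}\}$ (the reverse inclusion is clear since each $q_A$ holds in $\mathcal K$ by definition of $\mathcal F$). To analyze an arbitrary quasiidentity of modules one may, as in the remark after Lemma~\ref{lm1}, reduce to quasiidentities of a normal form involving only equations $s=0$, and then to a single-premise form; the modularity hypothesis on the relative congruence lattices is then used to run a commutator-theoretic or "relative principal congruence" argument showing that the relative congruence generated by a pair behaves additively, which forces the premise ideal to lie in $\mathcal F$. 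Concretely, I would work in a relatively free algebra $F=F_{\mathcal K}(x)$ on one generator: RCM makes $\Con_{\mathcal K}(F)$ modular, the relative congruences correspond to $\mathcal K$-submodules of $F\cong R/J$ for the appropriate left ideal $J$, and modularity of this submodule lattice, combined with directedness, is exactly what yields conditions (2) and (4) for $\mathcal F$; condition (3) comes from examining a two-generated relatively free algebra and composing the relevant congruences. Verifying that the abstract modularity genuinely delivers the multiplicative closure property (3) --- translating a lattice-theoretic identity into the ring-theoretic statement $(X),(Y)\in\mathcal F \Rightarrow (XY)\in\mathcal F$ --- is the step I expect to require the most care, since it is where the purely order-theoretic hypothesis has to be converted into algebra in $R$.
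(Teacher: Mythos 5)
The overall architecture is right: your candidate $\mathcal F=\{A\in\mathcal L\;:\;\mathcal K\models q_A\}$ and the closure operator $\overline S=\{m\;:\;\exists A\in\mathcal F\,(Am\subseteq S)\}$ are exactly the objects the paper works with, and your treatment of items (1) and (5) is fine; note also that item (3) needs no RCM at all --- it is pure propositional manipulation of $q_{(X)}$ and $q_{(Y)}$, so your plan to extract it from a two-generated relatively free algebra is an unnecessary detour. In the easy direction you have the roles of the axioms switched: (2) and (4) are what make $\overline S$ closed under addition and under scalars, while (3) is what makes $\overline S$ itself $\mathcal F$-closed, i.e.\ a $\mathcal K$-submodule. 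Also, ``a standard argument shows this sublattice-with-closure inherits modularity'' is not a standard fact for closure operators on modular lattices; you must compute. (The computation does work: for $\mathcal K$-submodules $S\subseteq U$ and $T$, if $m\in\overline{S+T}\cap U$ and $a_im=s_i+t_i$ for generators $a_i$ of some $A\in\mathcal F$, then $t_i=a_im-s_i\in U$, so $Am\subseteq S+(T\cap U)$ and $m\in\overline{S+(T\cap U)}$.) The paper instead verifies the \emph{weak extension principle} and quotes Theorem 4.1 of \cite{kearnes92}; your direct route is viable and arguably more elementary, but you did not carry it out.

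The genuine gap is in the hard direction. Everything there hinges on two facts you do not supply: (i) that an RCM quasivariety of modules is axiomatized relative to its variety by quasiidentities of the special form $Ax=0\to x=0$, which is what gives the inclusion $\{\mathcal F\text{-torsion-free modules}\}\subseteq\mathcal K$; and (ii) the \emph{converse} half of the $\mathcal K$-extension description, namely that $m\in\overline S$ implies $Am\subseteq S$ for some $A\in\mathcal F$ (the implication $Am\subseteq S\Rightarrow m\in\overline S$ is easy but useless by itself). Fact (ii) is precisely what drives (2) and (4): for (2) one puts $(1,1)=(1,0)+(0,1)$ into the $\mathcal K$-extension of $A\oplus B$ inside $R\oplus R$ and then needs a single $C\in\mathcal F$ with $C(1,1)\subseteq A\oplus B$, whence $C\subseteq A\cap B$; for (4) one needs $B\in\mathcal F$ witnessing that $r\cdot 1$ lies in the $\mathcal K$-extension of $A$ in $R$. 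Your sketch (``modularity \ldots combined with directedness, is exactly what yields (2) and (4)'') is circular --- directedness \emph{is} condition (2) --- and ``run a commutator-theoretic or relative principal congruence argument'' does not identify the mechanism. The paper obtains both (i) and (ii) from the $\Delta$-axiom machinery of \cite{kearnes92} (Theorems 5.1 and 5.2 there), translated into left ideals; without that external input, or a reproof of it in the module setting, the hard direction does not close.
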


We prove Theorem~\ref{main} in the next two sections,
but here we derive a corollary.

\begin{cor}\label{cor1}
  Let $R$ be a left Artinian ring.
  If $\mathcal K$ is an RCM quasivariety of $R$-modules,
  then $\mathcal K$ may be axiomatized relative to $R$-Mod
  by a finite set of identities together with at most one proper
  quasiidentity. In particular, if $R$ is a finitely
  presentable ring, then $\mathcal K$ is finitely axiomatizable. 
\end{cor}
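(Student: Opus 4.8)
The plan is to pass to the ring $R/I$, where $I$ is the annihilator of the variety generated by $\mathcal K$, and then invoke Theorem~\ref{main}. First I would apply Lemma~\ref{lm1}: the variety $\mathcal V$ generated by $\mathcal K$ has a two-sided annihilator ideal $I$, the identities $\{rx=0\mid r\in I\}$ axiomatize $\mathcal V$ relative to $R$-Mod, and $\mathcal V$ is definitionally equivalent to $R/I$-Mod. Since $R$ is left Artinian, so is the quotient $R/I$; and after transporting $\mathcal K$ along the definitional equivalence we obtain an RCM quasivariety of $R/I$-modules whose generated variety is all of $R/I$-Mod. So it suffices to prove the following: if $S$ is a left Artinian ring and $\mathcal K$ is an RCM quasivariety of $S$-modules generating $S$-Mod, then $\mathcal K$ is axiomatized relative to $S$-Mod by a \emph{single} quasiidentity $q_A$ for one finitely generated (hence, over Artinian $S$, arbitrary) left ideal $A$.

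By Theorem~\ref{main} there is a torsion notion $\mathcal F$ for $S$ with $\mathcal K$ equal to the class of $\mathcal F$-torsion-free $S$-modules, and this class is axiomatized relative to $S$-Mod by $\{q_A\mid A\in\mathcal F\}$. The key point is that $\mathcal F$, being a nonempty downward directed order filter in the poset $\mathcal L$ of finitely generated left ideals of $S$, has a \emph{least} element when $S$ is left Artinian: the descending chain condition on left ideals guarantees that among the members of $\mathcal F$ there is one, call it $A_0$, that is minimal, and downward directedness (Definition~\ref{torsion}(2)) forces $A_0\subseteq B$ for every $B\in\mathcal F$, so $A_0$ is the minimum. (Indeed: given $B\in\mathcal F$, pick $C\in\mathcal F$ with $C\subseteq A_0$ and $C\subseteq B$; minimality of $A_0$ gives $C=A_0$, whence $A_0=C\subseteq B$.) Then a module $M$ is $\mathcal F$-torsion-free iff $M\models q_{A_0}$: one direction is immediate since $A_0\in\mathcal F$, and conversely if $Am=0$ for some $A\in\mathcal F$ then $A_0m\subseteq Am=0$ as $A_0\subseteq A$, so $m=0$. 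Hence $\mathcal K$ is axiomatized relative to $S$-Mod by the single quasiidentity $q_{A_0}$.

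Pulling back through the definitional equivalence and Lemma~\ref{lm1}, $\mathcal K$ (as a quasivariety of $R$-modules) is axiomatized relative to $R$-Mod by the identities $\{rx=0\mid r\in I\}$ together with the translate of $q_{A_0}$, a single quasiidentity. Since $I$ is a finitely generated left ideal of the left Artinian ring $R$, finitely many of the identities $rx=0$ suffice (take $r$ ranging over a finite generating set of $I$), giving a finite set of identities plus at most one proper quasiidentity — if $A_0=R$, the torsion notion is trivial and $\mathcal K$ is already a variety, so no proper quasiidentity is needed. For the last sentence, if $R$ is finitely presentable then $R$-Mod is finitely axiomatizable, and adjoining the finitely many identities and the one quasiidentity keeps the axiomatization finite.

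The main obstacle is the clean extraction of a \emph{minimum} of $\mathcal F$ and the verification that $q_{A_0}$ alone suffices; everything else is bookkeeping through Lemma~\ref{lm1} and the definitional equivalence, plus the routine observation that finitely many generators of a left ideal convert $\{rx=0\mid r\in I\}$ into a finite set of identities. One should double-check the degenerate case $A_0=R$ to confirm that the statement "at most one proper quasiidentity" (rather than "exactly one") is what gets proved.
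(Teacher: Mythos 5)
Your proof is correct and follows essentially the same route as the paper: reduce via Lemma~\ref{lm1} to the case where $\mathcal K$ generates $R$-Mod, then use the descending chain condition together with downward directedness to show that the torsion notion $\mathcal F$ is a principal order filter, so a single quasiidentity $q_{A_0}$ suffices. The only point worth flagging is that your assertion that $I$ is finitely generated tacitly uses the Hopkins--Levitzki theorem (left Artinian implies left Noetherian), which the paper invokes explicitly.
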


\begin{proof}
  By the Hopkins-Levitski Theorem, a left Artinian ring is
  left Noetherian,
  so every left ideal of $R$ is finitely generated. In particular,
  we can effect the passage from $R$-modules to $R/I$-modules
  (as indicated in Lemma~\ref{lm1}) by imposing finitely
  many identities on the variety $R$-Mod. Thus we may
  assume henceforth that $\mathcal K$ generates $R$-Mod as a variety
  and our goal is now to prove that $\mathcal K$ can be axiomatized
  relative to $R$-Mod by at most one quasiidentity.

  Let $\mathcal F$ be the torsion notion guaranteed
  by Theorem~\ref{main}. Since $R$ is Artinian, $\mathcal F$
  is generated as an order filter by its minimal elements.
  By item (2) of Definition~\ref{torsion},
  $\mathcal F$ is a principal order filter in $\mathcal L$, say
  $\mathcal F$ is the order filter generated
  by $A\in\mathcal L$.
  Now the notion of `$\mathcal F$-torsion-free' is expressible
  by $Ax=0\to x=0$, or equivalently by the
  single quasiidentity $q_A$. (What has been left unsaid so far
    is that if $A\subseteq B$, then $Ax=0\to x=0$ is stronger
  than $Bx=0\to x=0$.)

  The last assertion of the corollary follows from the fact
  that $R$-Mod is finitely axiomatizable when $R$ is finitely presentable.
  \end{proof}

The one proper quasiidentity mentioned in Corollary~\ref{cor1}
can be eliminated when $R$ is commutative.

\begin{cor}\label{cor2}
  Let $R$ be a commutative Artinian ring.
  Any RCM quasivariety of $R$-modules is a variety.
\end{cor}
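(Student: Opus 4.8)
The plan is to rerun the argument of Corollary~\ref{cor1} and then observe that over a commutative Artinian ring the one surviving quasiidentity is vacuous. First I would apply Lemma~\ref{lm1}: letting $I$ be the annihilator of the variety generated by $\mathcal K$ and replacing $R$ by $R/I$, we may assume that $\mathcal K$ generates $R$-Mod as a variety. The ring $R/I$ is again commutative Artinian, and the lattice of relative submodules of any member of $\mathcal K$ is unchanged by this passage (the $R$-action on such modules factors through $R/I$), so $\mathcal K$ remains RCM. Now proceed exactly as in the proof of Corollary~\ref{cor1}: since an Artinian ring is Noetherian, $\mathcal L$ is the poset of all ideals of $R$, Theorem~\ref{main} supplies a torsion notion $\mathcal F$, and the descending chain condition together with downward directedness (Definition~\ref{torsion}(2)) forces $\mathcal F$ to be a principal order filter, say $\mathcal F=\{B\in\mathcal L\mid A\subseteq B\}$ for a single ideal $A$. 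Thus $\mathcal K$ is the class of $R$-modules satisfying $Ax=0\to x=0$.

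The crux is then a ring-theoretic statement. Since $A\in\mathcal F$, the regularity condition Definition~\ref{torsion}(5) says that $Ar=0$ implies $r=0$; that is, $\ann(A)=0$, so $A$ is a faithful ideal. I claim that a commutative Artinian ring has no \emph{proper} faithful ideal, whence $A=R$. To prove the claim I would use the structure theorem $R\cong R_1\times\cdots\times R_n$ with each $R_i$ local Artinian, maximal ideal $\mathfrak m_i$ (necessarily nilpotent). Writing $A=A_1\times\cdots\times A_n$ one has $\ann(A)=\prod_i\ann_{R_i}(A_i)$, so it suffices to show that a proper ideal $A_i\subseteq\mathfrak m_i$ of a local Artinian ring has nonzero annihilator. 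If $\ell\ge 1$ is least with $\mathfrak m_i^{\ell}=0$, then $\mathfrak m_i^{\ell-1}\neq 0$ (reading $\mathfrak m_i^{0}=R_i$), and $A_i\mathfrak m_i^{\ell-1}\subseteq\mathfrak m_i^{\ell}=0$, so $\ann_{R_i}(A_i)\supseteq\mathfrak m_i^{\ell-1}\neq 0$. Hence $\ann(A)=0$ forces every $A_i=R_i$, i.e. $A=R$.

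Finally, $A=R$ gives $\mathcal F=\{R\}$, the trivial torsion notion, for which every $R$-module is torsion-free; so $\mathcal K$ is all of $R$-Mod, a variety. Unwinding the first reduction, the original $\mathcal K$ is the subvariety of $R$-Mod axiomatized by $\{rx=0\mid r\in I\}$, hence a variety. The only nonroutine point is the ring-theoretic claim, and the place to be careful is that it genuinely needs both commutativity and the Artinian hypothesis: $2\mathbb Z$ is a proper faithful ideal of the Noetherian ring $\mathbb Z$, so the proof must exploit nilpotence of the radical (equivalently the product decomposition), not merely finite generation of ideals.
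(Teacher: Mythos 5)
Your proof is correct, and the reduction steps (pass to $R/I$ via Lemma~\ref{lm1}, then use DCC plus downward directedness to make $\mathcal F$ principal with generator $A$) match the paper exactly. Where you diverge is the final, decisive step showing $A=R$. The paper exploits more of the torsion-notion axioms: item (4) makes $A$ two-sided (automatic here), item (3) applied to a finite generating set $X$ of $A$ gives $(X^2)\in\mathcal F$ and hence $A\subseteq (X^2)\subseteq A^2\subseteq A$, so $A$ is a finitely generated idempotent ideal; commutativity then yields $A=(e)$ with $e^2=e$, and item (5) applied to $r=1-e$ forces $e=1$. You instead use only item (5) -- faithfulness of $A$ -- together with the structure theorem for commutative Artinian rings ($R\cong\prod R_i$ local with nilpotent maximal ideals) to show that a commutative Artinian ring has no proper faithful ideal; your computation $A_i\mathfrak m_i^{\ell-1}\subseteq\mathfrak m_i^{\ell}=0$ with $\mathfrak m_i^{\ell-1}\neq 0$ is right, as is the decomposition $\ann(A)=\prod_i\ann_{R_i}(A_i)$. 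The trade-off: the paper's argument needs only the elementary fact that a finitely generated idempotent ideal in a commutative ring is generated by an idempotent (a Nakayama-style lemma), and it uses the Artinian hypothesis solely to get principality of the filter; your argument dispenses with axiom (3) entirely but invokes heavier structure theory of Artinian rings. Your closing remark about $2\mathbb Z$ correctly identifies why your route genuinely needs nilpotence of the radical and not just Noetherianity. Both proofs are sound.
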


\begin{proof}
  Let $\mathcal K$ be an RCM quasivariety of $R$-modules.
  We shall argue the proof for arbitrary $R$ until
  it is necessary to appeal to commutativity.
  
Using Lemma~\ref{lm1} we may reduce to the case where
%for the purpose of this proof
the variety generated
  by $\mathcal K$ is all of $R$-Mod. 
  In the proof of Corollary~\ref{cor1} we showed that
  %if $R$
%is left Artinian and $\mathcal K$ is an RCM quasivariety
%of $R$-modules such that the variety generated by $\mathcal K$
%is all of $R$-Mod, then
the torsion notion $\mathcal F$ associated 
to $\mathcal K$ is a principal filter in the poset of
finitely generated left
ideals of $R$. Let $A$ be the generator of this
principal filter.
%%%%%We can say more about this principal filter.
%If $R$ is a left Artinian ring, then a family
%$\mathcal F$ of left ideals of $R$ will satisfy
%(1)--(4) of Definition~\ref{torsion} iff
%$\mathcal F$ is precisely the set of left ideals containing
%some two-sided ideal $A\lhd R$ that satisfies $A^2=A$.

%If the principal generator
%of $\mathcal F$ is $A$, then i
Item (4) of the definition of
`torsion notion' implies that $A$ is a two-sided ideal.
For if $r\in R$, then there must be a $B\in \mathcal F$
such that $Br\subseteq A$. Since $A\subseteq B$ this yields
$Ar\subseteq Br\subseteq A$.

Another special property
that $A$ must satisfy is that $A^2=A$.
To see this, choose a finite set $X$ that generates $A$
as a left ideal. Then item (3) implies that $(X^2)\in \mathcal F$.
But clearly $(X^2)\subseteq A^2$, 
so $A\subseteq (X^2)\subseteq A^2\subseteq A$.

Now we invoke the commutativity hypothesis.
A finitely generated idempotent ideal is generated
by an idempotent element, so $A = (e)$ for some element
$e$ satisfying $e^2=e$. For $r = 1-e$ we have $Ar=0$,
so item (5) of the definition of `torsion notion'
yields that $1-e=0$, i.e. $e=1$, or equivalently $A=R$.
This forces $\mathcal F = \{R\}$. As noted
after Definition~\ref{torsion}, this implies
that every $R$-module is $\mathcal F$-torsion-free,
so $\mathcal K$ = $R$-Mod.
%Conversely, it can be shown that if $R$ is left Artinian, 
%$A$ is a two-sided ideal of $R$ satisfying $A^2=A$, 
%and $\mathcal F$ is the filter of left ideals generated by $A$,
%then $\mathcal F$ will be a torsion notion if item (5) of
%Definition~\ref{torsion} holds.
  \end{proof}

Corollaries~\ref{cor1} and \ref{cor2}
are not true if you weaken `Artinian' to `Noetherian',
since the quasivariety of torsion-free abelian groups is not finitely
axiomatizable and is not a variety.
Also, Corollary~\ref{cor2} is not true without the commutativity hypothesis.
To see this, let $R$ be the
%(left Artinian)
ring of upper triangular $2\times 2$
matrices over some field. If the matrix units in $R$
are $e_{11}, e_{12}, e_{22}$, then the quasivariety of $R$-modules
axiomatized relative to $R$-Mod by
$
(e_{11}x=0)\wedge (e_{12}x=0) \to (x=0)
$
is RCM and is not a variety.

%A special case of Corollary~\ref{cor1} concerns finite rings.
%
%\begin{cor}
%  An RCM quasivariety of modules over a
%  finite ring is finitely axiomatizable.
%  \end{cor}

\section{RCM $\Longrightarrow$ torsion notion}
In this section we prove that if $\mathcal K$
is an RCM quasivariety of $R$-modules and the variety
generated by $\mathcal K$ is all of $R$-Mod, then
there is a torsion notion $\mathcal F$ such that
$\mathcal K$ is the quasivariety of $\mathcal F$-torsion-free
$R$-modules. This is one direction of the proof
of Theorem~\ref{main}.

To prove what is needed we make use of the fact,
proved in \cite{kearnes92}, that
an RCM quasivariety has an `almost equational
axiomatization', and that the $\mathcal K$-extension
of a submodule can be computed easily with the aid
of that axiomatization. Here the {\it $\mathcal K$-extension} of
  a submodule $S\leq M$ is the least
  $\mathcal K$-submodule $\overline{S}\leq M$ that contains $S$.

We recall the necessary concept from \cite{kearnes92}.
A {\it $\Delta$-axiom} is a first-order sentence,
involving pairs of terms
$(p_j(x,y,\bar{u}, \bar{v}, \bar{z}),q_j(x,y,\bar{u}, \bar{v}, \bar{z}))$,
$j<n$, expressing that
\begin{enumerate}
\item the identities 
\[
p_j(x,x,\bar{u}, \bar{u}, \bar{z})=
q_j(x,x,\bar{u}, \bar{u}, \bar{z})
\]
hold for $j<n$, and
\item
  the quasiidentity
\[
\bigwedge (p_j(x,y,\bar{u}, \bar{u}, \bar{z})=
q_j(x,y,\bar{u}, \bar{u}, \bar{z})))\to (x=y)
\]
holds.
\end{enumerate}
We label this $\Delta$-axiom $\Delta(p,q)$.

The two theorems from 
\cite{kearnes92} that we will use are:

\begin{thm} \label{thm5.1}
  (Theorem 5.1 of \cite{kearnes92})
  Let $\mathcal K$ be an RCM quasivariety. $\mathcal K$
is axiomatized
by a set of $\Delta$-axioms combined with a set of identities.
\end{thm}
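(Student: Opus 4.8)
The plan is to show that $\mathcal K$ equals the quasivariety $\mathcal K^{\ast}$ axiomatized by the set $\mathrm{Id}(\mathcal K)$ of identities holding in $\mathcal K$ together with the set of all $\Delta$-axioms holding in $\mathcal K$. Since every $\Delta$-axiom is a quasiidentity conjoined with finitely many identities, $\mathcal K\subseteq\mathcal K^{\ast}$ is immediate, so the content is $\mathcal K^{\ast}\subseteq\mathcal K$; equivalently, every quasiidentity $\sigma\colon\bigwedge_{i<m}(s_i\approx t_i)\to(s\approx t)$ valid in $\mathcal K$ must be shown to hold in $\mathcal K^{\ast}$. Fix such a $\sigma$, with variables $\bar w$, let $\mathbf F=\mathbf F_{\mathcal K}(\bar w)$ be the $\mathcal K$-free algebra on $\bar w$, and let $\theta$ be the least $\mathcal K$-congruence of $\mathbf F$ containing all the pairs $(s_i,t_i)$; then $\mathbf F/\theta\in\mathcal K$ and $\mathcal K\models\sigma$ is equivalent to $(s,t)\in\theta$. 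So the goal becomes: starting from the fact $(s,t)\in\theta$, extract a derivation of $s\approx t$ from $s_1\approx t_1,\dots,s_m\approx t_m$ that uses only identities and $\Delta$-axioms valid in $\mathcal K$; any such derivation witnesses $\mathcal K^{\ast}\models\sigma$.

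The main tool will be relative congruence modularity in its Mal'cev form, namely the relative shifting property. I would use it to put the generation of the finitely generated $\mathcal K$-congruence $\theta$ into a normal form: from the generating pairs, $\theta$ is reached by an ordinal-indexed alternation of two kinds of closure step. \emph{Equational steps} are the collapses that are already instances of Mal'cev's congruence-generation lemma inside the variety generated by $\mathcal K$; these follow by equational reasoning from $\mathrm{Id}(\mathcal K)$ and the hypotheses $s_i\approx t_i$. \emph{Relative steps} collapse a pair $(a,b)$ that must be identified only because keeping it apart would push the current quotient out of $\mathcal K$; here modularity enters. Relative to the congruence already built, the pair $(a,b)$ together with the congruences produced so far sits in a modular configuration, and the relative shifting property lets one re-express ``$(a,b)$ is collapsed'' in the shape ``the reflexive binary relation in $x,y$ cut out by finitely many term-equations $p_j(x,y,\bar u,\bar u,\bar z)\approx q_j(x,y,\bar u,\bar u,\bar z)$ equals the diagonal'', after substituting $a,b$ for $x,y$ and elements of $\mathbf F$ for $\bar u,\bar z$. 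Checking that the chosen terms satisfy $p_j(x,x,\bar u,\bar u,\bar z)\approx q_j(x,x,\bar u,\bar u,\bar z)$ makes this a genuine $\Delta$-axiom $\Delta(p,q)$, and it holds in $\mathcal K$ because it was read off from an actual failure of $\mathcal K$-separation inside a member of $\mathcal K$. An induction on the ordinal rank of the generation process then produces the desired derivation of $(s,t)\in\theta$, hence of $\sigma$.

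I expect the crux — and the only place where anything beyond lattice-theoretic congruence generation is needed — to be this rewriting of a relative step into $\Delta$-form. My intention is to carry it out by splitting a relative step along the relative commutator. The ``skeletal'', relatively congruence distributive, part should be handled by Jónsson/Pigozzi-style terms as in the RCD case of \cite{pigozzi88}, where the relevant relations are diagonal-vanishing for elementary reasons. The residual ``abelian'' part is where modularity is indispensable: here one uses the fact, recalled in the introduction, that abelian congruences in an RCM quasivariety are quasiaffine — their blocks carry a module-like structure — so that a premise equation can be traded for its ``difference'', a pair of terms agreeing whenever $x=y$. Constructing these difference terms and verifying that the finally assembled $\Delta$-axiom really is valid in $\mathcal K$ (its quasiidentity part especially) is the technical heart of the argument. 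A subsidiary point still needing care is the soundness of the induction: one must confirm that $\theta$ is built from the generating pairs in a well-founded way, each relative step properly enlarging the congruence, so that the ordinal rank is meaningful and the induction closes; this is routine from the general theory of $\mathcal K$-congruence generation as a directed union of finitely generated pieces.
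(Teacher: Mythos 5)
The paper does not prove this statement at all: it is quoted verbatim as Theorem~5.1 of \cite{kearnes92}, so there is no internal proof to compare yours against. Judged on its own terms, your outline correctly reduces the problem to showing that every quasiidentity valid in $\mathcal K$ follows from $\mathrm{Id}(\mathcal K)$ together with the valid $\Delta$-axioms, and correctly locates the difficulty in converting a ``relative step'' (a pair forced together only because separating it would leave $\mathcal K$) into a $\Delta$-axiom. But at exactly that point the proposal stops being a proof. Asserting that ``the relative shifting property lets one re-express'' a relative collapse as a diagonal-vanishing system $p_j(x,y,\bar u,\bar u,\bar z)\approx q_j(x,y,\bar u,\bar u,\bar z)$ is essentially restating the conclusion of Theorems~5.1--5.2 rather than deriving it; and the claim that the resulting quasiidentity part ``holds in $\mathcal K$ because it was read off from an actual failure of $\mathcal K$-separation inside a member of $\mathcal K$'' is a non sequitur --- a configuration observed in one algebra of $\mathcal K$ does not yield a quasiidentity valid throughout $\mathcal K$ unless the configuration is extracted from a relatively free algebra in a uniform way.

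The missing ingredient is the one the present paper itself highlights in Section~4 when it quotes Theorem~4.1 of \cite{kearnes92}: the \emph{extension principle}, i.e.\ that for RCM $\mathcal K$ the map sending a congruence $\theta$ to its $\mathcal K$-extension $\overline\theta$ is a lattice homomorphism. The actual argument works in the $\mathcal K$-free algebra on the doubled variable set $x,y,\bar u,\bar v,\bar z$ (this doubling is why $\Delta$-axioms have the signature they do), uses the extension principle to control $\overline{\mathrm{Cg}(x,y)}$, $\overline{\mathrm{Cg}(\bar u,\bar v)}$ and their meets and joins, and reads the pairs $(p_j,q_j)$ off as generators of an appropriate congruence there; conditions (1) and (2) of the $\Delta$-axiom then record, respectively, the collapse under the diagonal substitution and the separation statement in the free algebra, and validity in all of $\mathcal K$ is automatic by freeness. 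Your alternative plan --- splitting a relative step along the relative commutator, with J\'onsson/Pigozzi terms for the distributive part and quasiaffine structure for the abelian part --- is not carried out, and nothing in the proposal indicates how the two halves would be reassembled into a single $\Delta$-axiom; as written this is a declared intention, not an argument. If you want to complete the proof along honest lines, start from the extension principle rather than from the shifting lemma alone.
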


For the following theorem, the $\mathcal K$-extension of a congruence
$\theta$ is the least $\mathcal K$-congruence containing $\theta$.

\begin{thm} \label{thm5.2}
    (Theorem 5.2 of \cite{kearnes92})
  Let $\mathcal K$ be an RCM quasivariety.
Let $\m a\in {\mathcal K}$, $\theta\in\Con(\m a)$, and $u, v\in A$.
Then $(u,v)$ belongs to the $\mathcal K$-extension of $\theta$
iff there is some $\Delta$-axiom $\Delta(p,q)$ valid in $\mathcal K$,
some pairs $(a_i,b_i)\in\theta$, and some elements $\bar{c}$ such that
$p_i(u,v,\bar{a}, \bar{b}, \bar{c})=q_i(u,v,\bar{a}, \bar{b}, \bar{c})$
for all $i$.
\end{thm}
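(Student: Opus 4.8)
The plan is to fix, via Theorem~\ref{thm5.1}, an axiomatization of $\mathcal K$ by a set of $\Delta$-axioms together with a set of identities, and then to prove the theorem by comparing two relations on $\m a$: the $\mathcal K$-extension $\overline\theta$ of $\theta$, and the relation $\rho$ consisting of all pairs $(u,v)$ for which a witness of the stated form exists (some $\Delta$-axiom $\Delta(p,q)$ valid in $\mathcal K$, some pairs $(a_i,b_i)\in\theta$, and some $\bar c$ with $p_i(u,v,\bar a,\bar b,\bar c)=q_i(u,v,\bar a,\bar b,\bar c)$ for all $i$). The theorem is the assertion $\rho=\overline\theta$, and I would prove the two inclusions separately. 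The inclusion $\rho\subseteq\overline\theta$ is the routine half: given $(u,v)\in\rho$ with witnessing data, pass to the quotient $\m a/\overline\theta$, which lies in $\mathcal K$; there the images of $a_i$ and $b_i$ agree for every $i$ since $(a_i,b_i)\in\theta\subseteq\overline\theta$, so the hypotheses of part~(2) of $\Delta(p,q)$ hold on the images of $u,v,\bar a,\bar c$, and since that quasiidentity holds in $\m a/\overline\theta$ we get that the images of $u$ and $v$ coincide, i.e. $(u,v)\in\overline\theta$.

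For the reverse inclusion $\overline\theta\subseteq\rho$ the plan is to show that $\rho$ is itself a $\mathcal K$-congruence of $\m a$ containing $\theta$; minimality of $\overline\theta$ then finishes the argument. The containment $\theta\subseteq\rho$ should follow by instantiating a single suitable $\Delta$-axiom valid in $\mathcal K$ built from a relative difference term for the RCM quasivariety $\mathcal K$ (whose existence is part of the machinery of \cite{kearnes92}): evaluated at a pair already in $\theta$ it yields a trivially true equation, so any $(u,v)\in\theta$ acquires a witness. (In the module case this is just the $\Delta$-axiom with term-pair $(d(x,u_0,v_0),\,y)$, $d(x,u_0,v_0)\approx x-u_0+v_0$: parts~(1) and~(2) are immediate, and evaluating at $(a_0,b_0)=(u,v)$ gives $d(u,u,v)=v$.) Reflexivity of $\rho$ is then a special case, and symmetry is easy: from a witness for $(u,v)$ one gets a witness for $(v,u)$ by passing to the mirror $\Delta$-axiom $p'_i(x,y,\bar u,\bar v,\bar z):=p_i(y,x,\bar v,\bar u,\bar z)$, $q'_i:=q_i(y,x,\bar v,\bar u,\bar z)$, which is again valid in $\mathcal K$, and by replacing each $(a_i,b_i)$ with $(b_i,a_i)\in\theta$.

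What remains---transitivity of $\rho$, compatibility of $\rho$ with the basic operations of $\m a$, and the fact that $\m a/\rho\in\mathcal K$---I would reduce to a single composition principle: a finite family of $\Delta$-axioms valid in $\mathcal K$, hooked together by substituting the term-pairs of some of them into argument slots of another, can be re-encoded as one $\Delta$-axiom valid in $\mathcal K$. Granting this, transitivity follows by composing the two given witnesses for $(u,w)$ and $(w,v)$ into one; compatibility follows, after reducing to changing a single coordinate of an operation at a time, by composing the operation into the outer term-pair of a witness; and $\m a/\rho\in\mathcal K$ follows by checking that whenever a $\Delta$-axiom valid in $\mathcal K$ has its part-(2) hypotheses satisfied modulo $\rho$, the relevant pair already lies in $\rho$---which is exactly the composition of the $\rho$-witnesses for those hypotheses into the $\Delta$-axiom under consideration. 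The identities in the axiomatization hold throughout the ambient variety, hence automatically in $\m a/\rho$.

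The main obstacle is precisely this composition principle, and this is where the hypothesis that $\mathcal K$ is RCM---rather than merely a quasivariety---is essential: congruence modularity is what forces the one-step description, guaranteeing that the naive transfinite closure of $\theta$ under applications of $\Delta$-axioms already stabilizes after a single application. I would expect to prove it by an explicit, if lengthy, manipulation of the Day/Gumm-style terms underlying the $\Delta$-axioms (the relative difference term together with the relative modularity terms), using them to absorb an inner $\Delta$-axiom instance into the argument tuple of an outer one while preserving both the part-(1) identities and the part-(2) quasiidentity; the commutator-theoretic facts for RCM quasivarieties developed in \cite{kearnes92} are what make such absorptions possible.
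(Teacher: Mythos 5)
First, a point of order: the paper does not prove this statement. It is quoted verbatim as Theorem~5.2 of \cite{kearnes92} and used as imported machinery, so there is no in-paper proof to compare your argument against. Judged on its own, your skeleton is the natural one --- define the relation $\rho$ of witnessed pairs and prove $\rho=\overline\theta$ --- and your argument for $\rho\subseteq\overline\theta$ is correct and complete: pass to $\m a/\overline\theta\in\mathcal K$, identify $\bar a$ with $\bar b$, and apply the part-(2) quasiidentity.

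The reverse inclusion, however, is where all the content of the theorem lives, and you have concentrated it entirely in an unproved ``composition principle.'' This is a genuine gap, not a routine verification, for several concrete reasons. (a) Your witness for $\theta\subseteq\rho$ uses $d(u,u,v)=v$; but an RCM quasivariety need not have a Mal'cev term, and a (relative) difference term satisfies $d(x,x,y)=y$ only modulo a commutator, so the proposed $\Delta$-axiom instance need not evaluate to a true equation. (b) Compatibility of $\rho$ with a unary polynomial $f$ cannot be obtained by ``composing $f$ into the outer term-pair'': the variables $x,y$ occupy distinguished slots of the $\Delta$-axiom, and there is no way to make $f(u),f(v)$ sit in those slots by substituting into $p_i,q_i$; one needs a genuinely new $\Delta$-axiom, and producing it is where relative modularity must be used. (c) Most tellingly, in the module specialization worked out in this very paper, the facts corresponding to transitivity of $\rho$ and its closure under scalars (down-directedness of $\mathcal F$ and item~(4) of Definition~\ref{torsion}) are \emph{derived from} Theorem~\ref{thm5.2} via Theorem~\ref{delta4mod}, not used to prove it. So your composition principle is at least as hard as the theorem itself, and the sketched derivation risks circularity. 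Until that principle is actually established --- and the proof in \cite{kearnes92} proceeds differently, via the extension principle and the one-step computation of $\mathcal K$-extensions on free algebras rather than by closing a witnessed relation under composition --- the hard direction remains unproved.
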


When dealing with quasivarieties of modules it is possible
to code a $\Delta$-axiom $\Delta(p,q)$
as a left ideal in such a way that
the following are true.

\begin{thm}\label{delta4mod}
  Let $\Delta(p,q)$ be a $\Delta$-axiom
  and let $A$ be its encoding as a left ideal.
\begin{enumerate}
\item An $R$-module
$M$ satisfies $\Delta(p,q)$ iff it satisfies
  $Ax=0\to x=0$.
\item
  If $\mathcal K$ is an RCM quasivariety
of $R$-modules, $M\in\mathcal K$, $S\leq M$ is a submodule,
and $\overline{S}$ is its $\mathcal K$-extension,
then $m\in M$ can be shown to lie in $\overline{S}$
using $\Delta(p,q)$ (in the way described in Theorem~\ref{thm5.2}) 
iff $Am\subseteq S$.
\end{enumerate}
\end{thm}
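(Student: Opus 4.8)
The plan is to build the encoding of a $\Delta$-axiom explicitly and then verify both parts by direct translation of the module operations. Recall that a $\Delta$-axiom $\Delta(p,q)$ involves term pairs $(p_j, q_j)$ in variables $x, y, \bar u, \bar v, \bar z$; since we are in the variety $R$-Mod, every term is an $R$-linear combination of the variables, so we may write $p_j - q_j = \alpha_j x + \beta_j y + \sum_k \gamma_{jk} u_k + \sum_k \delta_{jk} v_k + \sum_k \vep_{jk} z_k$ for suitable elements of $R$. The first clause of the $\Delta$-axiom (the identity with $\bar u$ substituted for $\bar v$) forces $p_j(x,x,\bar u, \bar u, \bar z) - q_j(x,x,\bar u,\bar u,\bar z) = 0$ as a formal identity, which after collecting coefficients says $\alpha_j + \beta_j = 0$ and $\gamma_{jk} + \delta_{jk} = 0$ and $\vep_{jk} = 0$ for all relevant indices. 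Hence modulo these identities each $p_j - q_j$ reduces to $\alpha_j(x - y) + \sum_k \gamma_{jk}(u_k - v_k)$. The second clause is the quasiidentity $\bigwedge_j (p_j(x,y,\bar u,\bar u,\bar z) = q_j(x,y,\bar u,\bar u,\bar z)) \to x = y$; substituting $\bar u$ for $\bar v$ kills the $\gamma_{jk}$-terms as well, so each hypothesis becomes $\alpha_j(x-y) = 0$, and after renaming $w := x - y$ the quasiidentity is exactly $\bigwedge_j \alpha_j w = 0 \to w = 0$. Define $A := (\alpha_0, \ldots, \alpha_{n-1})$, the left ideal generated by these coefficients. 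This $A$ is the encoding.

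For part (1), I would argue that $M \satf \Delta(p,q)$ iff $M \satf \bigwedge_j \alpha_j w = 0 \to w = 0$, which by the observation already made in the excerpt (the equivalence of $Ax = 0 \to x = 0$ with $q_A$ for $A = (\alpha_0,\ldots,\alpha_{n-1})$) is exactly $M \satf Ax = 0 \to x = 0$. The only content here is the forward-and-backward passage between the $\Delta$-axiom in its original five-sorted form and the reduced one-variable form; the first clause of $\Delta(p,q)$ is an identity that holds in all of $R$-Mod after we impose $\alpha_j + \beta_j = 0$ etc., but in fact those coefficient relations are forced by $\Delta(p,q)$ itself being a genuine $\Delta$-axiom (the identities must hold), so no extra hypotheses are needed and the equivalence is unconditional on $M$.

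For part (2), apply Theorem~\ref{thm5.2}: $m \in \overline S$ via $\Delta(p,q)$ iff there exist pairs $(a_i, b_i)$ in the congruence $\theta$ corresponding to $S$ (i.e.\ $a_i - b_i \in S$) and elements $\bar c$ with $p_i(m, 0, \bar a, \bar b, \bar c) = q_i(m, 0, \bar a, \bar b, \bar c)$ for all $i$ — here I take $u = m$, $v = 0$, using that for modules the relevant pair $(u,v)$ may be replaced by $(m, 0)$ since congruences are determined by submodules. Plugging the linear form of $p_i - q_i$ in and using the coefficient relations, the equation $p_i(m,0,\bar a,\bar b,\bar c) - q_i(m,0,\bar a,\bar b,\bar c) = 0$ becomes $\alpha_i m + \sum_k \gamma_{ik}(a_k - b_k) = 0$, i.e.\ $\alpha_i m \in S$ for every $i$ (since each $a_k - b_k \in S$). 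Conversely, if $\alpha_i m \in S$ for all $i$, choose $a_i$ so that $a_i - b_i := -\alpha_i m \cdot (\text{appropriate combination})$ realizes the needed cancellation with $b_i = 0$ and suitable $\bar c$. Since $\alpha_i m \in S$ for all $i$ is the same as $Am \subseteq S$, this gives the claimed equivalence.

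The main obstacle I anticipate is bookkeeping rather than conceptual: making the substitution $(u,v) \mapsto (m,0)$ and the choice of witnesses $(a_i,b_i), \bar c$ in the converse direction of part (2) genuinely precise, so that the two-variable/two-sorted structure of the original $\Delta$-axiom is faithfully reflected by the one-sorted ideal-membership condition. One must also check that the $\bar z$ and $\bar v$ variables truly contribute nothing (their coefficients vanish or cancel), which is where the first clause of the $\Delta$-axiom definition is used; this requires care because $\bar z$ appears with no sign constraint in the general definition, yet the identity clause forces its coefficients to zero in the module setting. Once that reduction is nailed down, both halves of the theorem are immediate from Theorems~\ref{thm5.1} and \ref{thm5.2}.
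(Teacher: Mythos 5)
Your construction of the encoding coincides with the paper's: linearize $p_j-q_j$, use the identity clause to force $\alpha_j+\beta_j=\gamma_{jk}+\delta_{jk}=\varepsilon_{jk}=0$, reduce to $E_j(X,\bar{U})=\alpha_jX+\sum_k\gamma_{jk}U_k$, and set $A=(\alpha_0,\dots,\alpha_{n-1})$; part (1) and the forward half of part (2) then run exactly as in the paper. One caveat on the reduction: the coefficient relations are not forced merely by ``$\Delta(p,q)$ being a genuine $\Delta$-axiom.'' They follow because the identities of clause (1) hold in a class whose generated variety is \emph{all} of $R$-Mod (the standing assumption after Lemma~\ref{lm1}), hence in the free module $R$, which is what annihilates the coefficients. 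Without that assumption the passage from the five-variable form to $E_j$ is not valid in an arbitrary $M$.

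The genuine gap is in the converse of part (2). You attempt to choose $(a_i,b_i)$ and $\bar{c}$ so that $p_i(m,0,\bar{a},\bar{b},\bar{c})=q_i(m,0,\bar{a},\bar{b},\bar{c})$ holds \emph{exactly}; after reduction this asks for $s_k\in S$ with $\alpha_jm+\sum_k\gamma_{jk}s_k=0$ for all $j$ simultaneously, and no ``appropriate combination'' achieves this in general. For instance, for the $\Delta$-axiom with the single pair $(p_0,q_0)=(x,y)$ there are no $u,v$ variables at all, so exact equality would force $m=0$, whereas $Am\subseteq S$ only says $m\in S$. The resolution, and what the paper's own proof actually uses, is that the witnessing condition of Theorem~\ref{thm5.2} is evaluation modulo $\theta$ (for modules: $p_i-q_i\in S$), as in Theorem 5.2 of \cite{kearnes92}, rather than literal equality. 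With that reading one simply takes $\bar{s}=\bar{0}$, the condition becomes $E_j(m,\bar{0})=\alpha_jm\in S$ for all $j$, i.e.\ $Am\subseteq S$, and both directions of part (2) are immediate; your exact-equality version of the criterion is strictly stronger and makes the stated equivalence false.
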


In order to prove the theorem we must first describe
how to encode a $\Delta$-axiom as a left ideal.
%$A_{p,q}$ is constructed from $\Delta(p,q)$.

The first step of the construction
uses the fact that equations of the form $p=q$
can be rewritten as equations of the form $(p-q)=0$.
So take a $\Delta$-axiom for $R$-modules, $\Delta(p,q)$,
and rewrite its pairs as differences
\[
\begin{array}{rl}
  D_j(x,y,\bar{u},\bar{v},\bar{z}) &:=
p_j(x,y,\bar{u},\bar{v},\bar{z}) -
q_j(x,y,\bar{u},\bar{v},\bar{z})\\
&=
a_jx+b_jy+\sum_i c_{ij}u_{i}+\sum_i d_{ij}v_{i}+\sum_i e_{ij}z_{i},
\end{array}
\]
where $a_j, b_j, c_{ij}, d_{ij}, e_{ij}\in R$.
Item (1) from the definition of a $\Delta$-axiom now reads
\[
\tag*{$(1)^{\prime}$}
D_j(x,x,\bar{u}, \bar{u}, \bar{z})=0=(a_j+b_j)x+
\sum_i (c_{ij}+d_{ij})u_i + \sum_i e_{ij}z_i.
\]
We will be working in the situation where
$\mathcal K$ is a quasivariety of modules and
the variety it generates is all of $R$-Mod.
For $(1)^{\prime}$ to hold in such a quasivariety the coefficients
in the righthand expression must all be zero, i.e.
$a_j+b_j = c_{ij}+d_{ij} = e_{ij} = 0$. Thus
\[
D_j(x,y,\bar{u}, \bar{v}, \bar{z})=a_j(x-y) + 
\sum_i c_{ij}(u_{i}-v_{i})
\]
(no dependence on the last block of variables).
Introducing new variables $X, U_i$
to represent $x-y, u_i-v_i$ we shall find that the
module term operation
\begin{equation}\label{Eform}
E_j(X,\bar{U}) = a_jX + \sum_i c_{ij}U_i
\end{equation}
can be used to replace the pair $(p_j,q_j)$ in the definition
of `$\Delta$-axiom'. That is, $\Delta(p,q)$ can be rewritten in a reduced
form in an obvious way using the terms $E_j(X,\bar{U})$.

The
%finitely generated
left ideal associated to $\Delta(p,q)$
is defined to be $A=(a_0,\ldots,a_{n-1})$,
the left ideal generated by the coefficients of $X$
in the module terms $E_j(X,\bar{U})$, $j<n$.

\begin{proof}[Proof of Theorem~\ref{delta4mod}]
  For part (1) of Theorem~\ref{delta4mod}
  consider a $\Delta$-axiom $\Delta(p,q)$ and write
  it using the terms from (\ref{Eform}).
  Condition $(1)^{\prime}$ of the definition of a $\Delta$-axiom now reads
\[
\tag*{$(1)^{\prime\prime}$}
E_j(0,\bar{0}) = 0,
\]
which must hold simply because $E_j$ is a module term.
Condition (2) of the definition of a $\Delta$-axiom reads
\[
\bigwedge_j (E_j(X,\bar{0}) = 0)\to (X=0).
\]
This is equivalent to $Ax=0\to x=0$ for $x=X$.
This establishes Theorem~\ref{delta4mod}~(1).

Now we turn to Theorem~\ref{delta4mod}~(2).
Suppose that $M\in \mathcal K$, $S\leq M$ and $m\in \overline{S}$.
Choose $\Delta(p,q)$ witnessing that $m\in \overline{S}$.
With $\Delta(p,q)$ written in terms of the $E_j$'s we have
that there exist a tuple $\bar{s}$ with entries in $S$ such that
$E_j(m,\bar{s}) = a_jm+\sum_i c_{ij}s_{ij}$
belongs to $S$ for all $j$. This means that
$a_jm \in -\sum_i c_{ij}s_{ij}+S = S$
for all $j$, or $Am\subseteq S$ for $A$ equal to the
associated left ideal.
Conversely, assume that $Am\subseteq S$. Then for
$\bar{s} = \bar{0}$ we have that $E_j(m,\bar{0})\in S$ for all $j$,
so the form of $\Delta(p,q)$ that uses the terms $E_j(X,\bar{U})$
shows that $m\in\bar{S}$.
This establishes Theorem~\ref{delta4mod}~(2).
\end{proof}

Now we state and prove the main theorem of this section.

\begin{thm}
  Let $\mathcal K$ be an RCM quasivariety such that the variety
  generated by $\mathcal K$ is all of $R$-Mod. If
  $\mathcal F$ is the set of left ideals of $R$
  that code $\Delta$-axioms true in $\mathcal K$,
  then $\mathcal F$ is a torsion notion for $R$-modules
  and 
  %If $\mathcal F$ is a torsion notion for $R$-modules, and
  $\mathcal K$ is the quasivariety of $\mathcal F$-torsion-free
  $R$-modules.
  \end{thm}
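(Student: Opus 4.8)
The plan is to establish two preliminary facts---a workable criterion for membership in $\mathcal F$, and the observation that $R$ and $R\oplus R$ lie in $\mathcal K$---and then to verify clauses (1)--(5) of Definition~\ref{torsion} and deduce that $\mathcal K$ is the quasivariety of $\mathcal F$-torsion-free modules.

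The criterion I would record is: a finitely generated left ideal $A=(a_0,\dots,a_{n-1})$ belongs to $\mathcal F$ if and only if $\mathcal K\models Ax=0\to x=0$. One direction is Theorem~\ref{delta4mod}~(1); for the other, the $\Delta$-axiom built from the terms $E_j(X)=a_jX$ has encoding $A$ and is valid in $\mathcal K$ exactly when that quasiidentity holds. This gives $\mathcal F\subseteq\mathcal L$ and clause~(1) at once: $R\in\mathcal F$ since $Rx=0\to x=0$ holds in every module, and $A\subseteq B$ with $A\in\mathcal F$ forces $B\in\mathcal F$ because $Bx=0$ implies $Ax=0$. Clause~(3) is then the routine observation that if $(X),(Y)\in\mathcal F$ and $(XY)m=0$ for $m$ in some $M\in\mathcal K$, then $(X)(ym)=0$ gives $ym=0$ for every $y\in Y$, so $(Y)m=0$ and $m=0$.

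For the second fact, $R$ and $R\oplus R$ lie in $\mathcal K$ because $\mathcal K$ and $R$-Mod satisfy the same identities (as $\mathcal K$ generates $R$-Mod), so any equation failing in a free $R$-module fails in some member of $\mathcal K$; hence the free $R$-modules, being subdirectly representable by $\mathcal K$, lie in the quasivariety $\mathcal K$. With $R,R\oplus R\in\mathcal K$, the formula for the $\mathcal K$-extension that comes from combining Theorems~\ref{thm5.2} and~\ref{delta4mod}~(2)---namely $\overline S=\{m\mid Am\subseteq S\text{ for some }A\in\mathcal F\}$---may be used in these two modules. Clause~(5) is then immediate, since $R\in\mathcal K$ satisfies $Ax=0\to x=0$ for every $A\in\mathcal F$. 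For clause~(4): in $R$ the $\mathcal K$-extension of the submodule $A$ already contains $1$ (take $A$ itself as the witnessing ideal, since $A\cdot 1\subseteq A$), so $\overline A=R$; hence any $r\in R$ lies in $\overline A$, which by Theorem~\ref{delta4mod}~(2) means $Br\subseteq A$ for some $B\in\mathcal F$.

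The step I expect to require the cleverest move is clause~(2), downward directedness, and the move is to pass to the rank-two free module. Given $A,B\in\mathcal F$, work in $R\oplus R\in\mathcal K$ with standard generators $e_1,e_2$. As in clause~(4), $e_1$ lies in the $\mathcal K$-extension of $Ae_1$ and $e_2$ in that of $Be_2$, so the $\mathcal K$-extension of $Ae_1+Be_2$ contains both $e_1$ and $e_2$ and is therefore all of $R\oplus R$; in particular $(1,1)$ lies in it, and Theorem~\ref{delta4mod}~(2) yields $C\in\mathcal F$ with $C\cdot(1,1)\subseteq Ae_1+Be_2$, i.e.\ $C\subseteq A\cap B$. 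Finally, for the identification of $\mathcal K$: by Theorem~\ref{thm5.1} and the hypothesis on $\mathcal K$, the identities in the resulting axiomatization are consequences of the axioms of $R$-Mod, so $\mathcal K$ is cut out inside $R$-Mod by a set of $\Delta$-axioms; by the criterion above each of these is equivalent over $R$-Mod to $q_A$ for its encoding $A\in\mathcal F$, while every $A\in\mathcal F$ gives a quasiidentity valid in $\mathcal K$. Hence $M\in\mathcal K$ if and only if $M\models q_A$ for all $A\in\mathcal F$, which is precisely the assertion that $\mathcal K$ is the quasivariety of $\mathcal F$-torsion-free $R$-modules.
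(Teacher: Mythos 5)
Your proposal is correct and follows essentially the same route as the paper's proof: the same extension formula derived from Theorems~\ref{thm5.2} and~\ref{delta4mod}, the same use of $R\in\mathcal K$ for clauses (4) and (5), the same passage to $R\oplus R$ with $S=A\oplus B$ for downward directedness, and the same composition of quasiidentities for clause (3). The only cosmetic difference is that you characterize $\mathcal F$ up front as the f.g.\ left ideals $A$ with $\mathcal K\models q_A$, whereas the paper enlarges to that set after noting both sets axiomatize $\mathcal K$; this changes nothing of substance.
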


\begin{proof}
  According to Theorem~\ref{thm5.1}, $\mathcal K$ is axiomatized
  relative to $R$-Mod by a set of $\Delta$-axioms together with
  a set of identities. Since the variety generated by $\mathcal K$
  is all of $R$-Mod, we will not use any identities other than
  those that hold in $R$-Mod. According to Theorem~\ref{delta4mod}, the
  $\Delta$-axioms true in $\mathcal K$ are equivalent
  to a family of statements of the form
  $Ax=0\to x=0$ where $A$ is a finitely generated left ideal.
  Let $\mathcal F$ be the set of
  finitely generated left ideals of $R$ such that
  $Ax=0\to x=0$ holds in $\mathcal K$. Since the subset
  of these left ideals that arise from $\Delta$-axioms
  already serves to axiomatize $\mathcal K$ relative
  to $R$-Mod, the full set also serves to axiomatize
  $\mathcal K$ relative
  to $R$-Mod. It follows from this that, if we show that
  $\mathcal F$ is a torsion notion, then $\mathcal K$
  must be the quasivariety of $\mathcal F$-torsion-free $R$-modules.

  Item (1) from definition of a `torsion notion' is the claim
  that $\mathcal F$ is an order filter in the poset of finitely
  generated left ideals of $R$. That is, if $A$ and $B$ are finitely
  generated left ideals of $R$, $A\subseteq B$, and $Ax=0\to x=0$ holds
  in $\mathcal K$,  then $Bx=0\to x=0$
  also holds in $\mathcal K$. This is true because $A\subseteq B$
  implies that  $Bx=0\to Ax=0$.

  Item (5) is the next easiest to verify. Since the variety
  generated by $\mathcal K$ is $R$-Mod, both $\mathcal K$
  and $R$-Mod have the same free modules.
  Hence the 1-generated free module $R$ belongs to $\mathcal K$.
  Hence $R$ satisfies $Ax=0\to x=0$ for each $A\in \mathcal F$,
  which is exactly what (5) claims.

  Item (2) asserts $\mathcal F$ is down directed. Choose $A, B\in\mathcal F$.
  We shall apply Theorem~\ref{delta4mod} to the situation
  $M:=R\oplus R\;(\in\mathcal K)$ and $S:=A\oplus B\leq M$.
  Note that the pair $(1,0)\in M$ belongs to the $\mathcal K$-extension
  of $S$, since $A\in \mathcal F$ and $A(1,0)\subseteq A\oplus B$.
  Similarly $(0,1)$ belongs to the $\mathcal K$-extension of $S$,
  since $B\in\mathcal F$ and $B(0,1)\subseteq A\oplus B$.
  Since $\overline{S}$ is a submodule, the element
  $(1,0)+(0,1)=(1,1)$ must
  belong to the $\mathcal K$-extension of $A\oplus B$. Hence
  there must exist $C\in \mathcal F$ such that $C(1,1)\subseteq A\oplus B$.
  Necessarily $C\subseteq A\cap B$.

  Item (4) asserts that for all $A\in\mathcal F$ and $r\in R$ there is a
  $B\in\mathcal F$ such that $Br\subseteq A$. To prove this we again
  apply the second part of Theorem~\ref{delta4mod}. Let $M=R\in\mathcal K$
  and let $S=A$. The element $1\in R(=M)$ belongs to the $\mathcal K$-extension
  of $A(=S)$, since $A\cdot 1\subseteq S$. The $\mathcal K$-extension of
  $S$ is a submodule, so for any $r\in R$ we have that $r\cdot 1 = r\in M$
  also belongs to the $\mathcal K$ extension of $S=A$.
  Theorem~\ref{delta4mod} guarantees the existence of $B\in \mathcal F$
  such that $B\cdot r\subseteq A$, which is what item (4) requires.

  Item (3) asserts that if $X, Y\subseteq R$
    are finite subsets such that the left ideals
    $(X)$ and $(Y)$ belong to $\mathcal F$, 
    then the left ideal $(XY)$ belongs to $\mathcal F$.
    To prove this, assume that $X = \{a_0,\ldots,a_{m-1}\}$
    and $Y = \{b_0,\ldots,b_{n-1}\}$. The fact that
    $(X), (Y)\in\mathcal F$ implies that $\mathcal K$ satisfies
    the quasiidentities
    \[
    \bigwedge_i   (a_ix=0)\to (x=0) \quad\textrm{and} \quad
    \bigwedge_j   (b_jx=0)\to (x=0).
    \]
    But this means that $\mathcal K$ satisfies
    \begin{equation}\label{xy}
    \bigwedge_j  \left(\bigwedge_i (a_i(b_jx)=0)\right)\to (x=0).
    \end{equation}
    For, if $\bigwedge_i (a_i(b_jx)=0)$ holds for a fixed $j$,
    then the quasiidentity associated to $X$ guarantees that $b_jx=0$.
    But if this holds for all $j$, then the quasiidentity
    associated to $Y$ guarantees that $x=0$.
    Now (\ref{xy}) is just the quasiidentity associated to
    $XY=\{a_ib_j\;|\;i<m, j<n\}$. Since we have shown that it
    holds in $\mathcal K$ we conclude that $(XY)\in\mathcal F$.
  \end{proof}

\section{torsion notion $\Longrightarrow$ RCM}
In this section we prove that if
$\mathcal F$ is a torsion notion for $R$-modules,
then the quasivariety of $\mathcal F$-torsion-free
$R$-modules is RCM and the variety it generates is
all of $R$-Mod.
This is other direction of the proof
of Theorem~\ref{main}.

\begin{lm}\label{extension}
  Assume that
  $\mathcal F$ is a torsion notion for $R$-modules,
  and that $\mathcal K$ is the quasivariety of
  $\mathcal F$-torsion-free $R$-modules.
  If $M\in \mathcal K$ and $S\leq M$ is a submodule
  of $M$, then the $\mathcal K$-extension of $S$ is the set
  \[
\overline{S}:=\{m \in M\;|\; \exists A\in \mathcal F(Am\subseteq S)\}.
  \]
\end{lm}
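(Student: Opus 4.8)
The plan is to show that the set
\[
T:=\{m \in M\;|\; \exists A\in \mathcal F\ (Am\subseteq S)\}
\]
appearing on the right-hand side is a $\mathcal K$-submodule of $M$ containing $S$, and that it is contained in every $\mathcal K$-submodule of $M$ containing $S$. This identifies $T$ with $\overline S$, the least $\mathcal K$-submodule containing $S$.

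First I would check that $T$ is a submodule with $S\subseteq T$. Since $\mathcal F$ is a nonempty order filter it contains $R$ (item (1) of Definition~\ref{torsion}), and $Rs\subseteq S$ for $s\in S$, so $S\subseteq T$; in particular $0\in T$. If $m_1,m_2\in T$ are witnessed by $A_1,A_2\in\mathcal F$, then item (2) supplies $C\in\mathcal F$ with $C\subseteq A_1\cap A_2$, and then $C(m_1-m_2)\subseteq A_1m_1+A_2m_2\subseteq S$, so $m_1-m_2\in T$. If $m\in T$ is witnessed by $A\in\mathcal F$ and $r\in R$, then item (4) yields $B\in\mathcal F$ with $Br\subseteq A$, whence $B(rm)=(Br)m\subseteq Am\subseteq S$ and $rm\in T$.

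The crux is to verify that $M/T$ is $\mathcal F$-torsion-free, i.e. that $T$ is a $\mathcal K$-submodule. Suppose $A\in\mathcal F$ and $Am\subseteq T$; we must show $m\in T$. Write $A=(Y)$ with $Y=\{a_1,\dots,a_k\}$ finite (all members of $\mathcal F$ are finitely generated). Each $a_im$ lies in $T$, witnessed by some $B_i\in\mathcal F$, and by repeated use of item (2) we may choose a single $B=(X)\in\mathcal F$ with $X$ finite and $B\subseteq\bigcap_i B_i$, so that $b(a_im)\in S$ for every $b\in B$ and every $i$. Then for any element $\sum_j r_jx_ja_{i_j}$ of the left ideal $(XY)$ we get $\big(\sum_j r_jx_ja_{i_j}\big)m=\sum_j r_j\big(x_j(a_{i_j}m)\big)\in S$, because $x_j\in B$ and $S$ is a submodule; thus $(XY)m\subseteq S$. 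Since $(XY)\in\mathcal F$ by item (3), we conclude $m\in T$. I expect this step — passing from ``$Am\subseteq T$'' back to a single ideal of $\mathcal F$ that kills $m$ modulo $S$ — to be the main obstacle, and it is precisely where the multiplicativity axiom (3), together with the finite generation built into Definition~\ref{torsion}, is needed.

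Finally, for minimality, let $T'$ be any $\mathcal K$-submodule of $M$ with $S\subseteq T'$, so that $M/T'$ is $\mathcal F$-torsion-free. If $m\in T$, pick $A\in\mathcal F$ with $Am\subseteq S\subseteq T'$; then $m+T'$ is an $\mathcal F$-torsion element of $M/T'$, hence $m+T'=0$, i.e. $m\in T'$. Therefore $T\subseteq T'$. Combined with the two preceding paragraphs, this shows $T$ is the least $\mathcal K$-submodule of $M$ containing $S$, i.e. $T=\overline S$.
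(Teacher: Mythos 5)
Your proposal is correct and follows essentially the same route as the paper: verify that the right-hand set is a submodule using down-directedness and item (4), establish that the quotient is $\mathcal F$-torsion-free via the multiplicativity axiom (3) applied to finite generating sets, and conclude minimality by noting that any $\mathcal K$-submodule containing $S$ must absorb every $\mathcal F$-torsion element modulo itself. The only (harmless) differences are cosmetic: you check closure under differences rather than sums, and your minimality step compares directly with an arbitrary $\mathcal K$-submodule $T'\supseteq S$, which is if anything slightly cleaner than the paper's phrasing.
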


(In this lemma we are not assuming that $\mathcal K$ is RCM,
so we cannot refer to Theorem~\ref{delta4mod}.)

\begin{proof}
  The set $\overline{S}$ defined in the statement
  contains $S$ because $S$ is a submodule. (One can take
  $A=R\in\mathcal F$ to prove any $m\in S$ belongs to $\overline{S}$.)

  Let's prove that $\overline{S}$ 
  is closed under addition.
  If $x, y\in \overline{S}$, then there exist $A, B\in \mathcal F$
  such that $Ax, By\subseteq S$. By the down directedness
  of $\mathcal F$ there is a $C\subseteq A\cap B$ such that
  $C\in \mathcal F$. For this $C$ we have
  \[C(x+y)\subseteq Cx+Cy\subseteq Ax+By\subseteq S,\]
  yielding $x+y\in \overline{S}$.

  Now we argue that $\overline{S}$ is closed under scalar
  multiplication.
  Assume that $x\in\overline{S}$ and $r\in R$.
  Since $x\in\overline{S}$ 
  there is some $A\in \mathcal F$ such that $Ax\subseteq S$.
  By item (4) of Definition~\ref{torsion} there
  exists $B\in\mathcal F$ such that $Br\subseteq A$. Thus
  \[B(rx)\subseteq Ax\subseteq S,\]
  yielding $rx\in \overline{S}$.

  Next we argue that $\overline{S}$ is a $\mathcal K$-submodule
  of $M$. For this we must show that $M/\overline{S}\in \mathcal K$,
  or that $M/\overline{S}$ is $\mathcal F$-torsion-free.
  This can be established by showing that if $A\in \mathcal F$, 
  $m\in M$, and $Am\subseteq\overline{S}$, then $m\in\overline{S}$.
  Suppose that $A = (a_0,\ldots,a_{m-1})$ as a left ideal.
  The statement $Am\subseteq\overline{S}$ now means
  $\{a_0m, \ldots, a_{m-1}m\}\subseteq \overline{S}$. For each $k$
  there must exist $A_k\in\mathcal F$ such that $A_k(a_km)\subseteq S$.
  By the down directedness of $\mathcal F$ there is a $B\subseteq \cap A_k$,
  and this $B$ has the property that $Ba_km\subseteq S$ for all $k$.
  Suppose that $B=(b_0,\ldots,b_{n-1})$ as a left ideal. By item (3)
  of Definition~\ref{torsion} the left ideal $C$ generated
  by the set $\{b_ja_i\;|\;i<m, j<n\}$ belongs to
  $\mathcal F$. $Cm$ is the submodule of $M$
  generated by all elements $b_ja_im$, all of which belong to $S$.
  Thus $Cm\subseteq S$. This forces $m\in\overline{S}$,
  concluding the proof that $M/\overline{S}$ is $\mathcal F$-torsion-free.
  
  We have shown that $\overline{S}$ is a $\mathcal K$-submodule extending
  $S$, but still must show that it is the least such. For this it suffices
  to observe that, from the definition of $\overline{S}$,
  if $m\in\overline{S}$, then for any submodule $T\leq M$ satisfying
  $S\leq T\leq \overline{S}$ we have that $m/T$ is an $\mathcal F$-torsion
  element of $M/T$.
  \end{proof}

For the next theorem, which is the main result of the section,
we need another fact from \cite{kearnes92}.
In Theorem~4.1 of that paper it is shown that a quasivariety
is RCM if it satisfies the `extension principle'
and the `relative shifting lemma'. The second of these
properties will hold for any subquasivariety of an RCM quasivariety.
Thus, since $R$-Mod is RCM, any subquasivariety of $R$-Mod satisfies
the `relative shifting lemma'. The `extension principle' is
not typically inherited by subquasivarieties.

The {\it extension principle} for a quasivariety $\mathcal K$
of modules is the property that, for $M\in \mathcal K$,
the function mapping a submodule $S\leq M$ to its $\mathcal K$-extension
$\overline{S}$ is a lattice homomorphism from the lattice of submodules
of $M$ to the lattice of $\mathcal K$-submodules of $M$.
In the presence of the `relative shifting lemma', the extension
principle is equivalent to the {\it weak extension principle},
which asserts that if $S\cap T = 0$ for submodules $S, T\leq M$,
$M\in\mathcal K$, then $\overline{S}\cap \overline{T} = 0$.
(The equivalence of the weak and full extension principles
for quasivarieties satisfying the `relative shifting lemma'
is explained at the foot of page 482 of \cite{kearnes92}.)

Altogether, this means that a subquasivariety of $R$-Mod
is RCM iff it satisfies the weak extension principle.
We need this fact to prove the following theorem.

\begin{thm}
If $\mathcal F$ is a torsion notion for $R$-modules,
then the quasivariety $\mathcal K$ of $\mathcal F$-torsion-free
$R$-modules is RCM and the variety it generates is $R$-Mod.
  \end{thm}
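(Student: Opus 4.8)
The plan is to verify the two assertions separately. For the statement that the variety generated by $\mathcal{K}$ is all of $R$-Mod, the key observation is that $\mathcal{K}$ and $R$-Mod share the same free modules: item (5) of Definition~\ref{torsion} says precisely that the free module $R$ is $\mathcal{F}$-torsion-free, hence $R \in \mathcal{K}$, and then every free $R$-module (being a direct sum of copies of $R$, with torsion detected coordinatewise using down-directedness of $\mathcal{F}$) lies in $\mathcal{K}$ as well. Since every $R$-module is a homomorphic image of a free one, and $\mathcal{K}$ is closed under homomorphic images of its free members in the variety it generates, the variety generated by $\mathcal{K}$ contains all free modules and hence equals $R$-Mod.

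The main content is showing $\mathcal{K}$ is RCM. By the remark preceding the theorem, since $\mathcal{K}$ is a subquasivariety of the RCM quasivariety $R$-Mod, it automatically satisfies the relative shifting lemma, so it suffices to verify the weak extension principle: if $M \in \mathcal{K}$ and $S, T \leq M$ are submodules with $S \cap T = 0$, then $\overline{S} \cap \overline{T} = 0$. Here I would use Lemma~\ref{extension}, which gives the explicit description $\overline{S} = \{m \in M \mid \exists A \in \mathcal{F}\ (Am \subseteq S)\}$, and similarly for $\overline{T}$. So take $m \in \overline{S} \cap \overline{T}$: there are $A, B \in \mathcal{F}$ with $Am \subseteq S$ and $Bm \subseteq T$. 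By down-directedness (item (2) of Definition~\ref{torsion}) choose $C \in \mathcal{F}$ with $C \subseteq A \cap B$; then $Cm \subseteq Am \cap Bm \subseteq S \cap T = 0$, so $Cm = 0$. Writing $C = (c_0, \dots, c_{k-1})$, this says $c_i m = 0$ for all $i$, i.e. $m$ is killed by $C \in \mathcal{F}$, so $m$ is an $\mathcal{F}$-torsion element of $M$. Since $M \in \mathcal{K}$ is $\mathcal{F}$-torsion-free, $m = 0$, establishing $\overline{S} \cap \overline{T} = 0$.

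The step I expect to require the most care is not the weak extension principle itself — that argument is short once Lemma~\ref{extension} is in hand — but rather making sure the preliminary reductions are airtight: confirming that $\overline{S}$ as described really is the $\mathcal{K}$-extension (already done in Lemma~\ref{extension}), and confirming the claim that a subquasivariety of an RCM quasivariety satisfying the weak extension principle is itself RCM. The latter is quoted from Theorem~4.1 and page~482 of \cite{kearnes92}, so I would simply cite it, noting that the relative shifting lemma is inherited downward while the extension principle (equivalently, in its presence, the weak extension principle) is the only genuine hypothesis left to check. One should also double-check that the free modules of infinite rank are handled correctly in the first paragraph — an element of $\bigoplus_{i} R$ lies in a finitely generated sub-sum, so torsion-freeness of each copy of $R$ together with down-directedness suffices, and no further axioms of Definition~\ref{torsion} are needed for that part.
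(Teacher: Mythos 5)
Your proof is correct and follows essentially the same route as the paper: the weak extension principle is verified exactly as in the paper's proof (via Lemma~\ref{extension} and down-directedness of $\mathcal F$), and the variety claim rests on item (5) of Definition~\ref{torsion}. The extra discussion of infinite-rank free modules is harmless but unnecessary (and down-directedness plays no role there --- a single $A\in\mathcal F$ killing an element of $\bigoplus_i R$ kills each coordinate, so item (5) alone suffices); the paper simply observes that $R\in\mathcal K$ already generates $R$-Mod as a variety.
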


\begin{proof}
  As discussed before the statement of the theorem,
  to prove that the quasivariety of $\mathcal F$-torsion-free
  modules is RCM it suffices to establish the weak extension principle.
  So choose an $\mathcal F$-torsion-free module $M\in\mathcal K$ and two
  submodules $S, T\leq M$ satisfying $S\cap T=0$.
  Let's prove that their $\mathcal K$-extensions
  $\overline{S}$ and $\overline{T}$   are disjoint.

  Choose $m\in \overline{S}\cap \overline{T}$.
  By Lemma~\ref{extension} there exist $A, B\in\mathcal F$
  such that $Am\subseteq S$ and $Bm\subseteq T$. By
  the down directedness of $\mathcal F$ there is a $C\subseteq A\cap B$
  that belongs to $\mathcal F$, and for this $C$ we have
  $Cm\subseteq Am\cap Bm\subseteq S\cap T=0$, so $m$
  is an $\mathcal F$-torsion element. This forces $m=0$, as desired.
  We conclude that the quasivariety of $\mathcal F$-torsion-free
  $R$-modules is RCM.
  
  To show that the variety generated by the $\mathcal F$-torsion-free
  $R$-modules is all of $R$-Mod, it suffices to note that the 1-generated
  free $R$-module is $\mathcal F$-torsion-free. This is the content
  of item (5) of Definition~\ref{torsion}. Thus $R\in\mathcal K$,
  so the variety generated by $\mathcal K$ is $R$-Mod.
  \end{proof}

\section{final statement}

Given a fixed ring $R$, we now know that a typical RCM
quasivariety $\mathcal K$ of $R$-modules can be described by a pair
$(I, \mathcal F)$ where $I$ is an ideal -- the annihilator of $\mathcal K$
-- and $\mathcal F$ is a torsion notion for $R/I$. This information
can be expressed entirely in terms of the left ideal structure of $R$
by replacing $\mathcal F$ with the set $\mathcal G$ defined
to consist of all $\nu^{-1}(A)$ for $A\in \mathcal F$ and
$\nu\colon R\to R/I$ the natural map. This yields the following statement.

\begin{thm}
  Let $R$ be a ring. A quasivariety $\mathcal K$ of $R$-modules is
  RCM iff there is a pair $(I,\mathcal G)$ such that
  $\mathcal K$ is the collection of $R$-modules
  satisfying $Ix=0$ and
  $Ax=0\to x=0$  for all $A\in \mathcal G$. Here we require that 
  $I$ be a two-sided ideal of $R$ and $\mathcal G$
  be a family of left ideals of $R$, each containing $I$
  and finitely generated over $I$, such that items
  (1)--(4) of Definition~\ref{torsion} hold, along with
  \begin{enumerate}
  \item[$(5)^{\prime}$] (Regularity modulo $I$ of elements of $\mathcal G$)
    If $A\in \mathcal G$, $r\in R$ and $Ar\subseteq I$, then
    $r\in I$.    \hfill $\Box$
    \end{enumerate}
\end{thm}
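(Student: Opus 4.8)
The plan is to deduce this final statement from \thref{main} and \lmref{lm1}, using the standard correspondence between left ideals of $R$ that contain $I$ and left ideals of $R/I$. Throughout, write $\nu\colon R\to R/I$ for the quotient map. The maps $A\mapsto \nu(A)$ and $B\mapsto \nu^{-1}(B)$ are mutually inverse, inclusion-preserving bijections between the set of left ideals of $R$ containing $I$ and the set of all left ideals of $R/I$; moreover $\nu^{-1}(A)$ is finitely generated over $I$ exactly when $\nu(A)$ is a finitely generated left ideal of $R/I$ (because $\nu^{-1}(A)/I\cong A$ as left $R$-modules), and for finite $X,Y\subseteq R$ the sets $\nu(XY)$ and $\nu(X)\nu(Y)$ generate the same left ideal of $R/I$. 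I would first record the consequence of this that does all the work: a family $\mathcal G$ of left ideals of $R$, each containing $I$ and finitely generated over $I$, satisfies items (1)--(4) of Definition~\ref{torsion} (read with ``finitely generated'' replaced by ``finitely generated over $I$'') together with $(5)'$ if and only if $\mathcal F:=\{\nu(A)\mid A\in\mathcal G\}$ is a torsion notion for $R/I$. The only clause that is not a purely formal transport is the regularity one, and there $Ar\subseteq I$ is literally the same statement as $\nu(A)\nu(r)=0$, so $(5)'$ for $\mathcal G$ is exactly item (5) of Definition~\ref{torsion} for $\mathcal F$.

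For the forward direction, suppose $\mathcal K$ is RCM and set $I=\{r\in R\mid \mathcal K\models rx=0\}$; this is the annihilator of the variety $\mathcal V$ generated by $\mathcal K$, since identities holding in $\mathcal K$ hold in $\mathcal V$. By \lmref{lm1}, $I$ is a two-sided ideal and $\mathcal V$ is definitionally equivalent to $R/I$-Mod. Definitional equivalence carries quasivarieties to quasivarieties and commutes with the variety-generation operator, and it preserves the relative congruence lattices of the members; hence the image of $\mathcal K$ inside $R/I$-Mod is a quasivariety that is RCM and generates all of $R/I$-Mod. Now \thref{main} yields a torsion notion $\mathcal F$ for $R/I$ with this image equal to the quasivariety of $\mathcal F$-torsion-free $R/I$-modules. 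Put $\mathcal G:=\{\nu^{-1}(A)\mid A\in\mathcal F\}$: by the correspondence above $\mathcal G$ consists of left ideals of $R$ containing $I$, each finitely generated over $I$, and it satisfies (1)--(4) of Definition~\ref{torsion} and $(5)'$. Finally, an $R$-module satisfies $Ix=0$ precisely when it comes from an $R/I$-module under the equivalence, and for such a module $M$ and $A\in\mathcal G$ one has $Am=\nu(A)m$ for every $m\in M$; so the quasiidentities $Ax=0\to x=0$ $(A\in\mathcal G)$ say exactly that the associated $R/I$-module is $\mathcal F$-torsion-free, and the class cut out by $Ix=0$ and these quasiidentities is $\mathcal K$.

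The converse runs the same argument in reverse. Given $(I,\mathcal G)$ as in the statement, set $\mathcal F:=\{\nu(A)\mid A\in\mathcal G\}$, which is a torsion notion for $R/I$ by the correspondence. Let $\mathcal V\subseteq R$-Mod be the subvariety defined by $Ix=0$; by \lmref{lm1} it is definitionally equivalent to $R/I$-Mod. By the ``torsion notion $\Longrightarrow$ RCM'' half of \thref{main}, the quasivariety of $\mathcal F$-torsion-free $R/I$-modules is RCM (and generates $R/I$-Mod), and transporting this back through the equivalence shows that the class of $R$-modules satisfying $Ix=0$ and $Ax=0\to x=0$ for all $A\in\mathcal G$ is RCM. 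I expect the only real point of care, and hence the main obstacle, to be the bookkeeping around the definitional equivalence: one must verify that the $R/I$-module structure read off a module satisfying $Ix=0$ is well defined and functorial, that RCM-ness and variety generation are preserved in both directions, and that ``finitely generated over $I$'' is the precise translation of ``finitely generated'' across $\nu$. Once these are in place, every clause of Definition~\ref{torsion} transfers mechanically and no lattice-theoretic input beyond \thref{main} is required.
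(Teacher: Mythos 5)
Your proposal is correct and follows essentially the same route as the paper, which derives this theorem from Lemma~\ref{lm1} and Theorem~\ref{main} by transporting the torsion notion on $R/I$ back along $\nu^{-1}$ (the paper leaves the verification implicit, ending the statement with $\Box$). Your added bookkeeping about the ideal correspondence, the translation of ``finitely generated'' to ``finitely generated over $I$'', and the identification of $(5)'$ with item (5) for $R/I$ is exactly the content the paper omits.
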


%Hence the property of being an RCM quasivariety of
%$R$-modules is determined only through the specification
%of the part of $R$ that annihilates and the part of $R$
%that acts regularly.

%\bibliographystyle{plainnat}
\bibliographystyle{plain}

%\printbibliography
\end{document}